\DeclareMathOperator{\dom}{dom}
\DeclareMathOperator{\ran}{ran}
\def\MRB{{\mathbb{R}}}
\def\ds{\displaystyle}
\def\k{\kappa}
\def\sse{\subseteq}
\def\l{\lambda}
\def\lan{\langle}
\def\ran{\rangle}
\def\a{\alpha}
\def\om{\omega}
\def\ov{\overline}
\newtheorem{theorem}{Theorem}[section]
\newtheorem{lemma}[theorem]{Lemma}
\newtheorem{claim}[theorem]{Claim}
\numberwithin{equation}{section}
\def\l{\lambda}
\def\ds{\displaystyle}
\def\sse{\subseteq}
\def\lan{\langle}
\def\ran{\rangle}
\def\ov{\bar}
\def\rmark{\mbox{$\rm\bf\rule{0.06em}{1.45ex}\kern-0.05em R$}}
\def\pmark{\mbox{$\rm\bf\rule{0.06em}{1.45ex}\kern-0.05em P$}}
\def\nmark{\mbox{$\rm\bf\rule{0.06em}{1.45ex}\kern-0.05em N$}}
\def\vdash{\mbox{$\rm\| \kern-0.13em -$}}
\def\l{\lambda}
\def\ds{\displaystyle}
\def\sse{\subseteq}
\def\lan{\langle}
\def\ran{\rangle}
\def\ov{\bar}
\def\rmark{\mbox{$\rm\bf\rule{0.06em}{1.45ex}\kern-0.05em R$}}
\def\pmark{\mbox{$\rm\bf\rule{0.06em}{1.45ex}\kern-0.05em P$}}
\def\nmark{\mbox{$\rm\bf\rule{0.06em}{1.45ex}\kern-0.05em N$}}
\def\vdash{\mbox{$\rm\| \kern-0.13em -$}}
\newcommand{\lusim}[1]{\smash{\underset{\raisebox{1.2pt}[0cm][0cm]{$\sim$}}
{{#1}}}}
\title[Adding a lot of random reals by adding a few]{Adding a lot of random reals by adding a few}
\author[M. Gitik and M. Golshani ]{Moti Gitik and Mohammad Golshani }
\thanks{The second author's research has been supported by a grant from IPM (No. 96030417).}
\begin{document}

\thanks{ } \maketitle



\begin{abstract}
We study pairs $(V, V_1)$ of models of $ZFC$ such that adding $\kappa$-many random reals over $V_1$ adds $\lambda$-many random reals over $V$, for some
$\lambda > \kappa.$
\end{abstract}

\thanks{ } \maketitle

\section{Introduction}
In   \cite{g-g1} and \cite{g-g2}, we studied pairs $(V, V_1)$ of models of $ZFC$ such that adding $\kappa$-many Cohen reals over $V_1$ adds
$\lambda$-many Cohen reals  over $V$, for some
$\lambda > \kappa.$
In this paper we  prove similar results for random forcing, by producing pairs
$(V, V_1)$ of models of $ZFC$ such that adding $\kappa$-many random reals over $V_1$ adds
$\lambda$-many random reals  over $V$, where by $\kappa$-random reals over $V$ we mean a sequence  $\langle r_i: i< \kappa  \rangle $
which is $\MRB(\kappa)$-generic over $V$, and $\MRB(\kappa)$ is the usual forcing notion for adding $\kappa$-many random reals (see Section \ref{random forcing}). The proofs are more involved than those
given in  \cite{g-g1} and \cite{g-g2} for Cohen reals. This is because
random reals, in contrast to Cohen reals, may depend on $\omega$-many coordinates, instead
of finitely many as in the Cohen case. Also the proofs in  \cite{g-g1} and \cite{g-g2} were based on the fact that the product of Cohen forcing
with itself is essentially the same as Cohen forcing, while this  is not true in the case of random forcing.

\section{Random real forcing}
\label{random forcing}
In this section we briefly review random forcing and refer the reader to \cite{kunen} for more details.
Suppose $I$ is a non-empty set and consider the product measure space $2^{I \times \omega}$ with the standard product measure $\mu_I$ on it.  Let $\mathbb{B}(I)$ denote the class of Borel subsets of $2^{I \times \omega}$.
Note that the sets of the form
\[
[s]=\{x \in 2^{I \times \om}: x \upharpoonright \dom(s)=s              \},
\]
where  $s: I \times \om \to 2 $ is a finite partial function,
form a basis of open sets of $2^{I \times \om}.$

For Borel sets $S, T \in \mathbb{B}(I)$ set
\[
S \sim T \iff S \bigtriangleup T \text{~is null,~}
\]
where $S \bigtriangleup T$ denotes the symmetric difference of $S$ and $T$. The relation $\sim$ is easily seen to be an equivalence relation
on $\mathbb{B}(I).$
Then $\MRB(I)$, the forcing for adding $|I|$-many random reals, is defined as

\[
\MRB(I)= \mathbb{B}(I) / \sim.
\]
Thus elements of $\MRB(I)$ are equivalence classes $[S]$ of Borel sets modulo null sets. The order relation is defined by
\[
[S] \leq [T] \iff \mu_I(S \setminus T) =0.
\]

The following fact is standard.
\begin{lemma}
\label{chain condition lemma}
$\MRB(I)$ is c.c.c.
\end{lemma}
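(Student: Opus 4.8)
The plan is to run the classical measure-theoretic antichain argument. The first step is to translate the order-theoretic combinatorics of $\MRB(I)$ into a statement about measures. A condition $[S]$ is the trivial (bottom) element exactly when $\mu_I(S)=0$, so a genuine antichain consists of classes $[S]$ with $\mu_I(S)>0$; and two such conditions $[S],[T]$ are incompatible precisely when $\mu_I(S\cap T)=0$, since any common lower bound would be represented by a Borel set of positive measure contained, modulo null sets, in $S\cap T$. Thus an antichain in $\MRB(I)$ is represented by a family $\{S_\alpha : \alpha\in A\}$ of Borel subsets of $2^{I\times\omega}$, each of positive measure, with $\mu_I(S_\alpha\cap S_\beta)=0$ whenever $\alpha\neq\beta$.

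Next I would stratify the family according to the size of the measures. For each integer $n\geq 1$ put $A_n=\{\alpha\in A : \mu_I(S_\alpha)>1/n\}$, so that $A=\bigcup_{n\geq 1}A_n$. Fix $n$ and pick distinct indices $\alpha_1,\dots,\alpha_k\in A_n$. Since the sets $S_{\alpha_i}$ have pairwise null intersections, finite additivity of $\mu_I$ (every cross term vanishing) gives
\[
\mu_I\Big(\bigcup_{i=1}^{k} S_{\alpha_i}\Big)=\sum_{i=1}^{k}\mu_I(S_{\alpha_i})>\frac{k}{n}.
\]
As $\mu_I$ is a probability measure the left-hand side is at most $1$, forcing $k<n$; hence $|A_n|\leq n-1$. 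Therefore $A=\bigcup_{n\geq 1}A_n$ is a countable union of finite sets, hence countable, which is exactly the assertion that $\MRB(I)$ is c.c.c.

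I do not expect a real obstacle here: the only points requiring any care are the (standard) verification that incompatibility in the quotient poset $\mathbb{B}(I)/\!\sim$ corresponds to having null intersection of representatives, and the harmless bookkeeping around the bottom element $[\emptyset]$. It is worth remarking that the argument in fact yields slightly more — for every $n$, any antichain of $\MRB(I)$ contains at most $n-1$ conditions of measure strictly greater than $1/n$ — a uniform bound that will occasionally be convenient later.
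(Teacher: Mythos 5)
Your proof is correct: the paper gives no argument, simply calling the lemma standard, and your measure-stratification argument (at most $n-1$ pairwise almost-disjoint sets of measure $>1/n$, so any antichain is a countable union of finite sets) is exactly the standard proof being invoked. The preliminary reduction—incompatibility of $[S],[T]$ means $\mu_I(S\cap T)=0$—is also handled correctly, so nothing is missing.
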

Using the above lemma, we can easily show that $\MRB(I)$ is in fact a complete Boolean algebra.
Let $\lusim{F}$
  be an $\MRB(I)$-name for a function from $I \times \omega$ to $2$ such that for each $i \in I, n \in \omega$ and $k < 2,$ $\parallel  \lusim{F}(i, n) =k     \parallel_{\MRB(I)} = p_k^{i, n},$ where
\[
p_k^{i, n}= [\{x \in 2^{I \times \omega}: x(i, n)=k\}].
\]
This defines $\MRB(I)$-names $\lusim{r}_i \in 2^\omega, i \in I,$ such that
\[
\parallel \forall n < \omega,~\lusim{r}_i(n)= \lusim{F}(i, n)\parallel_{\MRB(I)} =1_{\MRB(I)}= [2^{I \times \omega}].
\]

\begin{lemma}
Assume $G$ is $\MRB(I)$-generic over $V$ and for each $i \in I$ set $r_i = \lusim{r}_i[G].$ Then each $r_i \in 2^\omega$
is a new real and for $i \neq j$ in $I, r_i \neq r_j$. Further, $V[G]=V[\langle  r_i: i \in I    \rangle].$
\end{lemma}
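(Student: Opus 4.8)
The plan is to establish the three assertions — that each $r_i$ is new, that $r_i \neq r_j$ for $i \neq j$, and that $V[G] = V[\langle r_i : i \in I \rangle]$ — by a density argument in the Borel algebra combined with the obvious fact that the $r_i$'s can be read off from $G$. The last point is almost immediate: each $r_i$ is defined from $G$ via the name $\lusim{r}_i$, so $V[\langle r_i : i \in I\rangle] \subseteq V[G]$; conversely, the cylinder sets $[s]$ for finite partial $s \colon I \times \omega \to 2$ form a basis, and $[s] \in G$ if and only if $r_i(n) = s(i,n)$ for all $(i,n) \in \dom(s)$, so $G$ itself is recoverable from $\langle r_i : i \in I \rangle$ as $\{[S] : S \in \mathbb{B}(I), \ \mu_I([s] \setminus S) = 0 \text{ for some } [s] \in G\}$ — more carefully, one recovers the generic filter on the dense subalgebra generated by the basic clopen sets and then takes its closure. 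Since $\MRB(I)$ is a complete Boolean algebra (as noted after Lemma \ref{chain condition lemma}) and this dense subalgebra determines it, $V[G] = V[\langle r_i : i \in I\rangle]$.

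For the genericity/newness claims I would argue by density. Fix $i \in I$ and suppose toward a contradiction that some condition $[S]$ forces $\lusim{r}_i = \check{x}$ for some $x \in 2^\omega$ in $V$. Then for every $n$, $[S] \leq p_{x(n)}^{i,n}$, so $\mu_I(S \setminus \{y : y(i,n) = x(n)\}) = 0$ for all $n$; intersecting over $n$, we get $\mu_I(S \setminus \{y : y(i,\cdot) = x\}) = 0$. But $\{y : y \restriction (\{i\}\times\omega) = x\}$ is a null set (it fixes $\omega$-many independent coordinates to a $\{0,1\}$-value, so its measure is $\prod_n \tfrac12 = 0$), hence $\mu_I(S) = 0$, i.e. $[S] = 0_{\MRB(I)}$, a contradiction. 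Thus no nonzero condition forces $\lusim{r}_i$ into $V$, so $r_i$ is new. The argument that $r_i \neq r_j$ for $i \neq j$ is identical: the set $\{y \in 2^{I\times\omega} : y(i,n) = y(j,n) \text{ for all } n\}$ is null (again a product of $\tfrac12$'s over the $\omega$-many coordinates $n$), so $\parallel \lusim{r}_i = \lusim{r}_j \parallel = 0$, and by genericity $r_i \neq r_j$ in $V[G]$.

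I expect no serious obstacle here; this is a "standard fact," and the only point requiring a moment's care is the bookkeeping in recovering $G$ from $\langle r_i : i \in I\rangle$ — one must check that the basic clopen sets $[s]$ generate a dense subalgebra of $\MRB(I)$ (which follows since Borel sets are approximated in measure by finite unions of cylinders) and that a generic filter on a dense subalgebra of a complete Boolean algebra determines the generic filter on the whole algebra. The measure-theoretic inputs (that fixing $\omega$-many coordinates, or equating two columns of coordinates, yields a null set) are elementary consequences of the product structure of $\mu_I$. I would present the density computation first, then close with the model-equality bookkeeping.
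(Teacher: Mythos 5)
The paper states this lemma without proof (it is quoted as a standard fact, with \cite{kunen} as background), so the comparison is with the standard argument. Your treatment of the first two assertions is correct: the null-set computations (fixing the column $\{i\}\times\omega$ to a ground-model real, or equating the columns $i$ and $j$, gives measure $\prod_n \tfrac12 = 0$) together with the observation that if $r_i$ were in $V$ some condition in $G$ would force $\lusim{r}_i=\check{x}$, do prove that each $r_i$ is new and that $r_i\neq r_j$ for $i\neq j$.

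The third assertion, however, has a genuine gap. Your recovery of $G$ from $\langle r_i: i\in I\rangle$ rests on the claim that the subalgebra generated by the basic clopen sets $[s]$ is dense in $\MRB(I)$, so that knowing which cylinders lie in $G$ determines $G$. This is false for the measure algebra: take an open set $U\subseteq 2^{I\times\omega}$ which is dense (topologically) and of measure $<1$; its complement $K$ is a closed set of positive measure, and for every finite partial function $s$ one has $\mu_I([s]\setminus K)=\mu_I([s]\cap U)>0$, so no nonzero clopen class lies below $[K]$. Thus the clopen classes are \emph{not} dense in $\MRB(I)$, and your set $\{[S]: \mu_I([s]\setminus S)=0 \text{ for some cylinder } [s]\in G\}$ is in general a proper subset of $G$. (This is exactly the Cohen/random contrast the introduction of the paper emphasizes: random conditions depend on $\omega$-many coordinates, and ``approximated in measure by finite unions of cylinders'' does not yield a clopen condition \emph{below} a given one.) The standard repair needs a different idea: work with Borel codes. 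Let $x_G\in 2^{I\times\omega}$ be defined from the sequence by $x_G(i,n)=r_i(n)$; show by induction on Borel codes (using that $\MRB(I)$ is a ccc complete Boolean algebra) that for every Borel set $S$ coded in $V$, the Boolean value of ``$x_{\dot G}\in S^*$'' is exactly $[S]$, where $S^*$ is the set recomputed in the extension from the code. Genericity then gives $[S]\in G$ iff $x_G\in S^*$, so $G=\{[S]: x_G\in S^*\}$ is definable in $V[\langle r_i: i\in I\rangle]$ from the sequence and parameters in $V$, which yields $V[G]\subseteq V[\langle r_i: i\in I\rangle]$; the other inclusion is, as you say, immediate. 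Without this (or an equivalent absoluteness argument) the equality $V[G]=V[\langle r_i: i\in I\rangle]$ has not been established.
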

The reals $r_i$ are called random reals. By $\kappa$-random reals over $V$ we mean a sequence  $\langle r_i: i< \kappa  \rangle $
which is $\MRB(\kappa)$-generic over $V$.

Given $b=[T] \in \MRB(I)$ and $|I|$-random reals $\langle r_i: i\in I  \rangle $ over $V$, we say  $\langle r_i: i\in I  \rangle $ extends $b$ if
\[
\forall i \in I, \forall n< \omega, \exists x \in T ( \mu_I(T \cap [x \upharpoonright \{(i, m): m<n  \}]) >0 \text{~and~} \forall m<n, ~x(i, m)=r_i(m)).
\]
This simply says that if $i$ and $n$ are given, then we can extend $b$ to some
\[
\bar b=[T \cap [x \upharpoonright \{(i, m): m<n  \}]]
\]
 such that
$\bar b$ decides $r_i \upharpoonright n.$ In fact, $\bar b \Vdash$``$\forall m<n,~\lusim{r}_i(m)  = x(i, m)$''.
Note that if $\langle r_i: i< \kappa  \rangle $ is a sequence of $|I|$-random reals generated by $G$, then
\[
G=\{ [T] \in \MRB(I):  \langle r_i: i\in I  \rangle  \text{~extends~}[T]                       \}
\]

The next lemma follows from Lemma \ref{chain condition lemma}.
\begin{lemma}
\label{characterization of random reals}
The sequence $\langle r_i: i< \kappa  \rangle $ is $\MRB(\kappa)$-generic over $V$ iff for each countable set $I \subseteq \kappa, I \in V,$
the sequence $\langle r_i: i\in I  \rangle $ is $\MRB(I)$-generic over $V$.
\end{lemma}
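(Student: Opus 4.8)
The plan is to reduce genericity over $\MRB(\kappa)$ to genericity over the countable-dimensional subforcings $\MRB(I)$, using two facts: every Borel subset of $2^{\kappa\times\omega}$ depends on only countably many coordinates, and, by Lemma~\ref{chain condition lemma}, every maximal antichain of $\MRB(\kappa)$ is countable. I will use throughout that for a countable $I\subseteq\kappa$ with $I\in V$, the map $e_I$ that sends the class of a Borel set $B\subseteq 2^{I\times\omega}$ to the class of $B\times 2^{(\kappa\setminus I)\times\omega}$ is a complete embedding of $\MRB(I)$ into $\MRB(\kappa)$; that it preserves and reflects order and incompatibility is immediate from the product-measure identity $\mu_\kappa(C\times 2^{(\kappa\setminus I)\times\omega})=\mu_I(C)$, and a reduction to $\MRB(I)$ of a condition $[T]\in\MRB(\kappa)$ is the class of the positive-fiber set $\{\,y\in 2^{I\times\omega}:\mu_{\kappa\setminus I}(\{z:y\cup z\in T\})>0\,\}$, by Fubini. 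I also record the elementary observation that $\langle r_i:i<\kappa\rangle$ extends $[T]$ if and only if $\mu_\kappa(T\cap[s_{i,n}])>0$ for all $i<\kappa$ and $n<\omega$, where $s_{i,n}$ is the finite partial function $(i,m)\mapsto r_i(m)$ $(m<n)$; consequently, if $[T]=e_I([B])$, then $\langle r_i:i<\kappa\rangle$ extends $[T]$ iff $\langle r_i:i\in I\rangle$ extends $[B]$ (the coordinates outside $I$ impose nothing, since $\mu_I(B)>0$), so, identifying $\MRB(I)$ with its image under $e_I$, we get $G_\kappa\cap\MRB(I)=G_I$, where $G_J=\{[T]\in\MRB(J):\langle r_i:i\in J\rangle\text{ extends }[T]\}$ for $J\subseteq\kappa$.

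For the forward direction, assume $\langle r_i:i<\kappa\rangle$ is $\MRB(\kappa)$-generic over $V$, so by the remark preceding the lemma $G_\kappa$ is an $\MRB(\kappa)$-generic filter over $V$ with $r_i=\lusim{r}_i[G_\kappa]$. Fix a countable $I\subseteq\kappa$ with $I\in V$. Since $\MRB(I)$ is a complete subforcing of $\MRB(\kappa)$, the set $G_\kappa\cap\MRB(I)$ is $\MRB(I)$-generic over $V$; by the previous paragraph it equals $G_I$, and it is routine from the definition of the names $\lusim{r}_i$ that $\lusim{r}_i[G_I]=\lusim{r}_i[G_\kappa]=r_i$ for $i\in I$. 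Hence $\langle r_i:i\in I\rangle$ is $\MRB(I)$-generic over $V$.

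For the converse, assume $\langle r_i:i\in I\rangle$ is $\MRB(I)$-generic over $V$, equivalently $G_I$ is $\MRB(I)$-generic over $V$, for every countable $I\subseteq\kappa$ with $I\in V$. First, $G_\kappa$ is a filter: it is upward closed because the extension relation is, and given $[S],[T]\in G_\kappa$ one picks a countable $I\in V$ containing all coordinates on which $S$ or $T$ depends, so $[S]=e_I([S_0])$ and $[T]=e_I([T_0])$ for some $[S_0],[T_0]\in\MRB(I)$; then $[S_0],[T_0]\in G_I$, and since $G_I$ is a filter $[S_0]\wedge[T_0]\in G_I$, whence $[S]\wedge[T]=e_I([S_0]\wedge[T_0])\in G_\kappa$ is a common lower bound. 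Second, $G_\kappa$ meets every dense $D\in V$: fixing in $V$ a maximal antichain $A\subseteq D$, Lemma~\ref{chain condition lemma} makes $A$ countable, so choosing in $V$ for each $[T]\in A$ a countable set of coordinates on which $T$ depends and taking their union produces a countable $I\in V$ with $A\subseteq\range(e_I)$. Since $e_I$ reflects incompatibility and $A$ is a maximal antichain of $\MRB(\kappa)$, $A'=e_I^{-1}[A]$ is a maximal antichain of $\MRB(I)$ (any positive $[S]\in\MRB(I)$ has $e_I([S])$ compatible with some $[T]\in A$, hence $[S]$ is compatible with $e_I^{-1}([T])\in A'$). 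As $G_I$ is $\MRB(I)$-generic it meets $A'$, so $\langle r_i:i\in I\rangle$ extends some $e_I^{-1}([T])$ with $[T]\in A$, and by the locality observation $\langle r_i:i<\kappa\rangle$ extends $[T]\in A\subseteq D$. Thus $G_\kappa$ is an $\MRB(\kappa)$-generic filter over $V$, i.e. $\langle r_i:i<\kappa\rangle$ is $\MRB(\kappa)$-generic over $V$.

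The step I expect to be the main obstacle is the verification that $e_I$ is a complete embedding, in particular the Fubini argument showing that the positive-fiber set is a genuine reduction; this is what powers both the ``restriction of a generic is generic'' step in the forward direction and the maximality of $A'$ in the converse. Everything else is the standard dictionary between generic sequences of random reals and generic filters, together with the countability of antichains from Lemma~\ref{chain condition lemma} and the fact that Borel subsets of $2^{\kappa\times\omega}$ depend on only countably many coordinates.
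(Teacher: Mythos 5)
Your proof is correct and follows exactly the route the paper intends: the paper gives no written proof of Lemma \ref{characterization of random reals}, saying only that it follows from Lemma \ref{chain condition lemma}, and your argument --- the complete embeddings $e_I$ of the countably supported subalgebras (with the Fubini positive-fiber reduction) for the forward direction, and the c.c.c.\ to capture a maximal antichain refining a given dense set inside $\range(e_I)$ for some countable $I\in V$ for the converse --- is precisely that standard argument written out in full. The only point worth keeping in mind is the usual convention that every condition of $\MRB(\kappa)$ is, modulo null sets, determined by countably many coordinates (i.e.\ one works with the product $\sigma$-algebra or passes to such a representative), which is what you use when choosing the supports in $V$; with that convention every step you wrote goes through.
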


\section{The first general fact about adding many random reals}
\label{first fact}
In this section we prove the following theorem, which is an analogue of Theorem 2.1 from \cite{g-g1}, and use it to get some consequences.
\begin{theorem}
\label{first}
Let $V_1$
be an extension of $V$. Suppose that in $V_1:$
\begin{enumerate}
\item [$(a)$] $\kappa < \lambda$ are infinite cardinals,

\item [$(b)$] $\lambda$ is regular,

\item [$(c)$] there exists an increasing sequence $\langle \kappa_n: n < \omega  \rangle$ cofinal in $\kappa.$ In particular $cf(\kappa) = \omega,$

\item [$(d)$] there exists  an increasing (mod finite) sequence $\langle
f_{\alpha}: \alpha < \lambda \rangle$ of functions in the product $\ds\prod_{n<
\omega}(\kappa_{n+1}\setminus \kappa_n),$

\item [$(e)$] there exists a club $C \subseteq \lambda$
which avoids points of countable $V$-cofinality.
\end{enumerate}
Then adding $\kappa$-many random reals over $V_1$ produces $\lambda$-many random reals over $V$.
\end{theorem}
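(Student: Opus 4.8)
The plan is to fix an $\MRB(\kappa)$-generic filter $G$ over $V_1$, with associated random reals $\langle r_i : i<\kappa\rangle = \langle \lusim{r}_i[G] : i<\kappa\rangle$, and to produce inside $V_1[G]$ a sequence $\langle s_\alpha : \alpha<\lambda\rangle$ of elements of $2^\omega$ that is $\MRB(\lambda)$-generic over $V$; since $V[\langle s_\alpha : \alpha<\lambda\rangle]\subseteq V_1[G]$, this gives the theorem. Two soft facts will be used freely. First, because $\MRB$ is c.c.c.\ (Lemma~\ref{chain condition lemma}) and because the statements ``a given Borel code names a null set'' and ``a given countable union of Borel codes has full measure'' are absolute, for any $J\in V$ the forcing $\MRB(J)^V$ is a complete suborder of $\MRB(J)^{V_1}$, so anything $\MRB(J)^{V_1}$-generic over $V_1$ is $\MRB(J)^V$-generic over $V$. (In particular $\langle r_i : i<\kappa\rangle$ already gives $\kappa$-many random reals over $V$; the content of the theorem is to climb from $\kappa$ to $\lambda$.) Second, by Lemma~\ref{characterization of random reals} applied with $\lambda$ in place of $\kappa$, and since a real is random over $V$ exactly when it avoids every null Borel set coded in $V$, it suffices to arrange that for every countable $I\subseteq\lambda$ with $I\in V$ the tuple $\langle s_\alpha : \alpha\in I\rangle$ misses every null Borel subset of $2^{I\times\omega}$ coded in $V$.

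The sequence $\langle s_\alpha : \alpha<\lambda\rangle$ will be read off from $\langle r_i : i<\kappa\rangle$ by a single Borel map $\Psi\colon 2^{\kappa\times\omega}\to 2^{\lambda\times\omega}$ defined in $V_1$ from the scale $\langle f_\alpha : \alpha<\lambda\rangle$, with each $s_\alpha$ depending only on $\langle r_{f_\alpha(n)} : n<\omega\rangle$; for countable $I\in V$ let $\Psi_I\colon 2^{\kappa\times\omega}\to 2^{I\times\omega}$ be the induced map. The crucial point is that if $(\Psi_I)_*\mu_\kappa$ is absolutely continuous with respect to $\mu_I$, then a $V$-coded null $B\subseteq 2^{I\times\omega}$ has $V_1$-coded null preimage $\Psi_I^{-1}(B)$, which $\langle r_i : i<\kappa\rangle$ avoids by genericity over $V_1$, so $\langle s_\alpha : \alpha\in I\rangle\notin B$. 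Thus everything reduces to choosing $\Psi$ with $(\Psi_I)_*\mu_\kappa\ll\mu_I$ for every countable $I\in V$, and I expect this to be the real work. Here the random case is genuinely harder than the Cohen case of \cite{g-g1}: there one simply sets $d_\alpha(n)=c_{f_\alpha(n)}(n)$, and the sets one must avoid for Cohen genericity are meager, whereas ``$z_\alpha(0)=z_\beta(0)$'' and the like are non-meager (clopen), so no harm comes from the fact that two of the $f_\alpha$'s may agree on finitely many coordinates. For random genericity those same sets are null; and the naive $s_\alpha(n)=r_{f_\alpha(n)}(n)$ then fails, because (as $\lambda$ is regular and $\kappa_1\setminus\kappa_0$ is small) some fibre $\{\alpha<\lambda : f_\alpha(0)=i\}$ has size $\lambda$, so there is a countable $I\in V$ on which all the $s_\alpha$ literally share the bit $r_i(0)$, placing $\langle s_\alpha : \alpha\in I\rangle$ inside the $V$-coded null set $\{z : z_\alpha(0)=z_\beta(0)\ \text{for all}\ \alpha,\beta\in I\}$. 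So $s_\alpha$ must depend on $r_{f_\alpha(n)}$ for infinitely many $n$ at once --- the ``$\omega$ many coordinates'' difficulty noted in the introduction --- arranged so that a coincidence of $f_\alpha$ and $f_\beta$ at some level (there are only finitely many such levels, the scale being increasing mod finite) is compensated at the later levels where they have already separated, keeping $\Psi$ coordinate-injective, up to a finite set, on every countable $I\in V$.

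The club hypothesis $(e)$ is what makes this bookkeeping possible. The plan is to first preprocess: pass to a suitable subscale of length $\lambda$ and alter each $f_\alpha$ on finitely many coordinates, so as to make the scale ``block-uniform'' relative to $C$: writing $C=\{\gamma_\xi : \xi<\lambda\}$ increasingly, on each interval $[\gamma_\xi,\gamma_{\xi+1})$ the functions $f_\alpha$ take values in a reserved part of each $\kappa_{n+1}\setminus\kappa_n$ and separate from one another in a prescribed, coordinate-disjoint way. Then, given a countable $I\in V$: $\sup I<\lambda$ because $\lambda$, being regular in $V_1$, is regular in $V$ too; $\cf^V(\sup I)\le\omega$, and every point of $C$ has uncountable $V$-cofinality, so $\sup I\notin C$; as $C$ is closed, $C\cap\sup I$ is bounded in $\sup I$, hence the cofinal tail of $I$ lies inside a single block $[\gamma_{\xi_0},\gamma_{\xi_0+1})$, and, iterating the same observation below $\gamma_{\xi_0}$, $I$ is a finite union of pieces each confined to one block. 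On a single block, block-uniformity lets one organise the coordinates of $2^{\kappa\times\omega}$ read by the bits of $\langle s_\alpha : \alpha\in I\rangle$ so that the map from those bits to the coordinates they read is injective off a finite set; this gives exactly $(\Psi_I)_*\mu_\kappa\ll\mu_I$ (indeed equality after a finite-dimensional correction). Together with the reduction above, this completes the proof. The single genuinely hard step is the explicit construction of $\Psi$ and the verification that $(\Psi_I)_*\mu_\kappa\ll\mu_I$ for all countable $I\in V$; the absoluteness remarks, the passage to countable index sets via Lemma~\ref{characterization of random reals}, and the downward transfer of regularity of $\lambda$ are routine.
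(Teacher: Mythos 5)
Your outer frame is fine: reducing via Lemma \ref{characterization of random reals} to countable $I\in V$, identifying $\MRB(I)$-genericity over $V$ with avoidance of all $V$-coded $\mu_I$-null Borel sets, and the observation (using $(e)$, as in the paper) that such an $I$ meets only finitely many blocks of $C$. But the entire substance of the proof is deferred to the construction of $\Psi$ and the verification that $(\Psi_I)_*\mu_\kappa\ll\mu_I$, which you do not carry out; and, more seriously, that target is not attainable under $(a)$--$(e)$. A block $[\gamma_\xi,\gamma_{\xi+1})$ of $C$ can have cardinality $\kappa$ (you cannot shorten blocks: thinning $C$ only lengthens them, and you may not insert new points, which could have countable $V$-cofinality), while $f_\alpha(n)$ ranges over $\kappa_{n+1}\setminus\kappa_n$, a set of size $<\kappa$. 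So at every coordinate $n$ the scale functions of a single block collide on fibres of size $>\aleph_0$, and no ``block-uniform'' preprocessing --- altering each $f_\alpha$ at finitely many coordinates, or passing to a subscale of length $\lambda$ --- can make the coordinate reads disjoint, nor can reading different bit positions of $r_{f_\alpha(n)}$ help, since the fibres are uncountable. Nothing in $(a)$--$(e)$ prevents infinitely many such collisions from landing inside one countable $I\in V$; once they do, a deterministic $\Psi$ of your shape forces infinitely many bit-agreements among $\langle s_\alpha:\alpha\in I\rangle$, so by Borel--Cantelli the pushforward charges a $\mu_I$-null set and $(\Psi_I)_*\mu_\kappa\not\ll\mu_I$. (Absolute continuity is also stronger than needed --- only $V$-coded null sets matter --- but exploiting that forces you to engage with the $V$-coded sets themselves, which is exactly what your reduction was designed to bypass.)

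The paper's proof is built to need only a density statement, not an almost-sure one: given a $V$-coded positive-measure $b=[T_b]$ from a dense set and an arbitrary condition $p$, it produces some $\bar p\le p$ forcing $\langle \lusim{s}_\alpha:\alpha\in I\rangle$ to extend $b$. Two ingredients there have no counterpart in your proposal. First, a measure computation (Claim \ref{finiteness}) shows that the set $\Delta$ of pairs of bits whose inequality is outright forced by $b$ is finite, so only finitely many ordinals $\alpha_{jl}$ need individual care. Second, the paper adds the randoms as $\langle r_{\imath,\tau}:\imath,\tau<\kappa\rangle$ and defines $s_\alpha$ with a \emph{random} shift $k(\imath)$ read off the auxiliary coordinate $r_{\imath,\imath}$; below a suitable condition this shift equals a number $m^*$ past which the finitely many relevant scale functions have pairwise separated, so those finitely many $s_\alpha$'s read pairwise distinct coordinates $f_\alpha(m^*+m)$ and can be steered into $T_b$. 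This alignment happens only with positive probability, never almost surely --- which is precisely why the density argument succeeds where your absolute-continuity requirement fails. To repair your approach you would either have to incorporate such shift-type randomness and then replace absolute continuity by a density argument over $V$-coded conditions (i.e., essentially reproduce the paper's proof), or establish a collision-avoidance property of the scale relative to all countable $V$-sets, which does not follow from hypotheses $(a)$--$(e)$.
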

\begin{proof}
There are two cases to consider: $(1):$ $\lambda=\kappa^{+}$
and $(2):$ $\lambda > \kappa^{+}.$ We give a proof for the first case, as the second case can be proved similarly, using ideas from [1, Theorem 2.1] (combined with the proof of the first case given below).
We may assume, for clarity of exposition, that $\min(C)=0$.

Thus assume that $\lambda=\kappa^{+},$ and force to add
$\k$-many random reals over $V_1$. We denote them  by $\lan r_{\imath, \tau} : \imath, \tau<\k\ran$. Also let $\lan f_\a : \a<\k^+\ran\in V_1$ be an
increasing (mod finite) sequence in $\ds\prod_{n<\om}(\k_{n+1}
\setminus \k_{n})$. We define a sequence $\langle  s_{\a}: \a < \kappa^+   \rangle$
of reals as follows:



Assume $\a < \k^+$.  Let $\a^*$ and $\a^{**}$ be two
consecutive points of $C$ so that $\a^*\leq \a<\a^{**}$. Let $\lan\a_{\imath} :
\imath<\k\ran$ be some fixed enumeration of the interval
$[\a^*,\a^{**})$ with $\a_0=\a^*$. Then for some $\imath<\k$,  $\a=\a_{\imath}$.
Let $k(\imath)=\min\{k<\om : r_{\imath, \imath}(k)=1\}$. Set

\begin{center}
 $\forall n<\om$, $s_{\a}(n)=r_{f_{\a}(k(\imath)+n), f_{\a}(k(\imath)+n)}(0)$.
\end{center}

The following lemma completes the proof.

\begin{lemma}
\label{randomness 1}
 $\lan s_{\a}:\a<\k^+\ran$ is a sequence of $\k^+$-many random
reals over $V$.
\end{lemma}

\begin{proof} First, we may assume that  $\langle r_{\imath, \tau} : \imath, \tau<\k\ran$ is ${\MRB}(\k \times \k)$-generic over $V_1$.
By Lemma \ref{characterization of random reals}, it suffices to show that for any
countable set $I\sse\k^+$, $I\in V$, the sequence $\lan s_{\a}:\a\in
I\ran$ is ${\MRB}(I)$-generic over $V$. Thus it suffices to prove
the following:

$\hspace{1.5cm}$ for every $p\in {\MRB}(\k \times \k)$
and every open dense subset $D\in V$

$(*)$ $\hspace{1cm}$ of ${\MRB}(I)$,~ there is
$\ov{p}\leq p$ such~ that $\ov{p} \vdash
``  \lan \lusim{s}_{\a} : \a\in I\ran$ extends

$\hspace{1.6cm}$   some element of $D$''.

Let $p$ and $D$ be as above. For simplicity suppose that
$p=1_{\MRB(\kappa \times \k)} = [2^{(\k \times \kappa) \times \omega}]$. By $(e)$ there are only finitely many $\a^*\in
C$ such that $I\cap[\a^*,\a^{**}) \neq\emptyset$, where
$\a^{**}=\min(C\setminus(\a^*+1))$. For simplicity suppose that there are exactly two $\a^*_1<\a^*_2$ in $C$ with this property. Let
$n^*<\om$ be such that for all $n\geq n^*$, $f_{\a^*_1}(n)<
f_{\a^*_2}(n)$.

Let $b=[T_b] \in D,$ where $T_b \subseteq 2^{I \times \om}.$
For $j\in\{1,2\}$, let $\{\a_{j_l}: l < k_j \leq \omega\}$ be an
 enumeration of $I \cap [\a^*_j,\a^{**}_j).$  For $j \in \{1,2\}$ and
$l < k_j$ let $\a_{j,l}=\a_{\imath_{jl}}$ where $\imath_{jl}<\k$ is
the index of $\a_{j,l}$ in the enumeration of the interval
$[\a^*_j,\a^{**}_j)$ considered  above.

For every $j_1, j_2 \in \{ 1, 2\}, l_1 < k_{j_1}, l_2 < k_{j_2}$
and $n_1, n_2 < \omega$ set
\[
c(j_1, j_2, l_1, l_2, n_1, n_2) = \parallel   \lusim{s}_{\a_{j_1,l_1}}(n_1) \neq     \lusim{s}_{\a_{j_2,l_2}}(n_2)\parallel.
\]
\begin{claim}
\label{finiteness}
The set
$\Delta= \{ (j_1, j_2, l_1, l_2, n_1, n_2): b \leq c(j_1, j_2, l_1, l_2, n_1, n_2)               \}$
is finite. Also, $(j_1, j_2, l_1, l_2, n_1, n_2) \in \Delta$ implies $(j_2, j_1, l_2, l_1, n_2, n_1) \in \Delta.$
\end{claim}
\begin{proof}
Recall that $b=[T_b].$ By shrinking $T_b$ if necessary, we can assume that $T_b$ is closed.
Then $2^{I \times \om} \setminus T_b$ is open, so there are finite  partial functions $t_k: I \times \om \to 2 $ such that
$2^{I \times \om} \setminus T_b = \bigcup_{k<\om} [t_k]$ and for $k \neq l,~[t_k] \cap [t_l]=\emptyset$.
For each $k$ set $\Omega_k= \{t: \dom(t)=\dom(t_k)$ and $t \neq t_k        \}$. Then each $\Omega_k$ is finite and $2^{I \times \om} \setminus [t_k] = \bigcup_{t \in \Omega_k} [t]$. So
\[
T_b = \bigcap_{k<\om} (2^{I \times \om} \setminus [t_k])= \bigcap_{k<\om} (\bigcup_{t \in \Omega_k} [t]).
\]
Also, as $\mu_I(T_b)>0,$ we have
\[
\mu_I(2^{I \times \om} \setminus T_b)=\sum_{k<\om} 2^{-|t_k|} <1.
\]
Note that $\mu_I(T_b)= 1 - \sum_{k<\om} 2^{-|t_k|} >0.$
Fix an increasing sequence $\langle \eta_k: k<\om  \rangle$ of natural numbers such that
\\
$(\dag)$$\hspace{4.cm}$$\sum_{k<\om} 2^{-\eta_k} < \dfrac{1-\mu_I(2^{I \times\omega} \setminus T_b)}{1+\mu_I(2^{I \times\omega} \setminus T_b)}.$

Assume, towards a contradiction, that the set $\Delta$ is infinite.
 For each $k<\omega$ let $X_k$ be a finite subset of $\Delta$ such that:
 \begin{enumerate}
\item $(j_1, j_2, l_1, l_2, n_1, n_2) \in X_k \implies$ at least one of $(\alpha_{j_1, l_1}, n_1)$
or $(\alpha_{j_2, l_2}, n_2)$ is not in $\dom(t_k)$.
\item The set
\[
\{ (\alpha_{j_i, l_i}, n_i):   (j_1, j_2, l_1, l_2, n_1, n_2) \in X_k \text{~and~} i=1,2               \} \setminus \dom(t_k)
\]
has size $2\eta_k$.
\item $X_k$'s, for $k<\omega,$ are pairwise disjoint.
\end{enumerate}
 Set
   $$Y_k=\dom(t_k) \cup \{ (\alpha_{j_i, l_i}, n_i):   (j_1, j_2, l_1, l_2, n_1, n_2) \in X_k \text{~and~} i=1,2               \}.$$

For each $t \in \Omega_k$ let
\begin{center}
$\Lambda_{k, t} = \{t': Y_k \to 2: t' \supseteq t$ and  $\exists (j_1, j_2, l_1, l_2, n_1, n_2) \in X_k , t'(\a_{j_1,l_1}, n_1) =t'(\a_{j_2,l_2}, n_2)   \}$.
\end{center}
Note that each $t' \in \Lambda_{k, t}$ is well-defined by clause (1)
above.
Let
$$\zeta_k=|\{ (\alpha_{j_i, l_i}, n_i):   (j_1, j_2, l_1, l_2, n_1, n_2) \in X_k \text{~and~} i=1,2               \} \cap \dom(t_k)|.$$
Then note that
$2\eta_k=2\xi_k + \zeta_k,$ where $\xi_k$ is the number of those $(j_1, j_2, l_1, l_2, n_1, n_2) \in X_k $ such that both
$(\alpha_{j_1, l_1}, n_1)$
and $(\alpha_{j_2, l_2}, n_2)$ are not in $\dom(t_k)$.

Set  $\bar{T}=  \bigcap_{k<\om} (\bigcup_{t \in \Omega_k}(\bigcup_{t' \in \Lambda_{k, t}} [t'])).$
Clearly, $|\Omega_k|=2^{|t_k|}-1,$ $|\Lambda_{k, t}| =2^{2\eta_k}-2^{\xi_k}$ and for each $t' \in \Lambda_{k, t}, |t'|=|t|+2\eta_k=|t_k|+2\eta_k$, and so
\[
\mu_I(\bigcup_{t \in \Omega_k}(\bigcup_{t' \in \Lambda_{k, t}} [t'])) = \sum_{t \in \Omega_k} (\sum_{t' \in \Lambda_{k, t}} \mu_I([t'])) = (2^{|t_k|}-1)(2^{2\eta_k} - 2^{\xi_k})2^{-(|t_k|+2\eta_k)}.
\]
It follows that

$\hspace{1.6cm}$$\mu_I(2^{I \times\omega} \setminus \bar T) \leq \sum_{k< \om} (1- (2^{|t_k|}-1)(2^{2\eta_k} - 2^{\xi_k})2^{-(|t_k|+2\eta_k)} )$

$\hspace{3.7cm}$$=\sum_{k< \om} (2^{\xi_k - 2\eta_k} + 2^{-|t_k|} - 2^{\xi_k - |t_k| - 2\eta_k}  )$


$\hspace{3.7cm}$$\leq \sum_{k< \om} (2^{\xi_k - 2\eta_k} + 2^{-|t_k|} + 2^{\xi_k - |t_k| - 2\eta_k}  ) $

$\hspace{3.7cm}$$ = \sum_{k<\om}2^{\xi_k - 2\eta_k}  + \sum_{k<\om} 2^{-|t_k|} + \sum_{k<\om}2^{\xi_k - |t_k| - 2\eta_k}  $

$\hspace{3.7cm}$$ \leq \sum_{k<\om}2^{\eta_k - 2\eta_k}  + \sum_{k<\om} 2^{-|t_k|} + \sum_{k<\om}2^{\eta_k - |t_k| - 2\eta_k}$ (as $\xi_k \leq \eta_k$)

$\hspace{3.7cm}$$=\sum_{k<\om}2^{-|t_k|} + \sum_{k<\om} 2^{-\eta_k} + \sum_{k<\om}2^{-|t_k|-\eta_k} $

$\hspace{3.7cm}$$\leq \sum_{k<\om}2^{-|t_k|}+ \sum_{k<\om} 2^{-\eta_k} + (\sum_{k<\om}2^{-|t_k|})(\sum_{k<\om}2^{-\eta_k})$

$\hspace{3.7cm}$$\leq \mu_I(2^{I \times\omega} \setminus T_b)+ \sum_{k<\om} 2^{-\eta_k} + \mu_I(2^{I \times\omega} \setminus T_b)(\sum_{k<\om}2^{-\eta_k})$

$\hspace{3.7cm}$$< 1$ (by $(\dag)$).

Hence
\[
\mu_I(\bar{T}) =1 - \mu_I(2^{I \times\omega} \setminus \bar T) >0.
\]

Set $\bar{b}=[\bar{T}],$
Then $\bar{b} \in \MRB(I)$ and $\bar{b} \leq b.$ Also note that:
\begin{center}
$\forall x \in \bar{T}, \forall k< \om, \exists (j_1, j_2, l_1, l_2, n_1, n_2) \in X_k,~ x(\a_{j_1,l_1}, n_1) =x(\a_{j_2,l_2}, n_2).$
\end{center}
Let $S'$ consists of those $y \in 2^{(\k \times \kappa) \times \omega}$ such that for some $k<\om$, some  $(j_1, j_2, l_1, l_2, n_1, n_2) \in X_k$ and
some
$x \in \bar{T}$
\begin{enumerate}
\item $y(f_{\a_{j_1 l_1}}(n_1), f_{\a_{j_1 l_1}}(n_1), n_1)=x(\a_{j_1 l_1}, n_1)$.
\item $y(f_{\a_{j_2 l_2}}(n_2), f_{\a_{j_2 l_2}}(n_2), n_2)=x(\a_{j_2 l_2}, n_2)$.
\item $x(\a_{j_1 l_1}, n_1)=x(\a_{j_2 l_2}, n_2).$
\end{enumerate}
Clearly, $\mu_{\k \times \k}(S')>0$.
For each $y \in S'$ let $k_y$ denote the least $k$ as above. Similarly, let $(j^y_1, j^y_2, l^y_1, l^y_2, n^y_1, n^y_2)$ denote the least  $(j_1, j_2, l_1, l_2, n_1, n_2) \in X_k$ as above (with respect to some fixed well-ordering of $\Delta$).
For some $\bar k<\om$ and $(\bar j_1, \bar j_2, \bar l_1, \bar l_2, \bar n_1, \bar n_2) \in X_k$, the set $S''=\{y \in S': k_y=\bar k$ and $(j^y_1, j^y_2, l^y_1, l^y_2, n^y_1, n^y_2)= (\bar j_1, \bar j_2, \bar l_1, \bar l_2, \bar n_1, \bar n_2)       \}$ has positive measure.
Let
\[
\bar S=\{y \in S'':   y(\imath_{\bar j_1, \bar l_1}, \imath_{\bar j_1, \bar l_1}, 0)= y(\imath_{\bar j_2, \bar l_2}, \imath_{\bar j_2, \bar l_2}, 0)=1                \}.
\]
Then $\mu_{\k \times \k}(\bar S) =$$1 \over 4$$\mu_{\k \times \k}(S'')>0$ and if $\bar p=[\bar S],$ then $\bar p \in \MRB(\k \times \k)$ and
\[
\bar p \text{~}\Vdash \text{``} \lusim{k}(\imath_{\bar j_1, \bar l_1})=\lusim{k}(\imath_{\bar j_2, \bar l_2})=0\text{''}.
\]

For each $y \in \bar{S}$, if $x$ (with $\bar k$ and $\bar u$) is a witness as above, then

$\hspace{1.8cm}$$\bar{p} \Vdash ~\text{``} \lusim{s}_{\a_{\bar j_1, \bar l_1}}(\bar n_1)= \lusim{r}_{f_{\a_{\bar j_1, \bar l_1}}(\bar n_1),
f_{\a_{\bar j_1, \bar l_1}}(\bar n_1)}(0)$


$\hspace{4.5cm}$$= y(f_{\a_{\bar j_1, \bar l_1}}(\bar n_1), f_{\a_{\bar j_1, \bar l_1}}(\bar n_1), \bar n_1)$


$\hspace{4.5cm}$$=x(\a_{\bar j_1, \bar l_1}, \bar n_1)$ (by $(1)$)


$\hspace{4.5cm}$$=x(\a_{\bar j_2, \bar l_2}, \bar n_2)$ (by $(3)$)

$\hspace{4.5cm}$$ = y(f_{\a_{\bar j_2, \bar l_2}}(\bar n_2), f_{\a_{\bar j_2, \bar l_2}}(\bar n_2), \bar n_2)$ (by $(2)$)

$\hspace{4.5cm}$$=\lusim{r}_{f_{\a_{\bar j_2, \bar l_2}}(\bar n_2), f_{\a_{\bar j_2, \bar l_2}}(\bar n_2)}(0)$

$\hspace{4.5cm}$$=\lusim{s}_{\a_{\bar j_2, \bar l_2}}(\bar n_2)\text{''}.$

So $\bar{b} \nleq  \parallel  \lusim{s}_{\a_{\bar j_1, \bar l_1}}(\bar n_1) \neq    \lusim{s}_{\a_{\bar j_2, \bar l_2}}(\bar n_2)    \parallel,$ and since
 $\bar{b} \leq b,$ we have
$$b \nleq    \parallel  \lusim{s}_{\a_{\bar j_1, \bar l_1}}(\bar n_1) \neq    \lusim{s}_{\a_{\bar j_2, \bar l_2}}(\bar n_2)    \parallel.$$

It follows that $(\bar j_1, \bar j_2, \bar l_1, \bar l_2, \bar n_1, \bar n_2) \notin \Delta$,
which  is a  contradiction. The second part of the claim is evident and the claim follows.
\end{proof}
Say that $(j, l)$ appears in $\Delta$ if $(j, l)=(j_1, l_1)$ for some $(j_1, j_2, l_1, l_2, n_1, n_2) \in \Delta.$
Also set
\[
\Lambda=\{(j, l): (j, l) \text{~appears in~} \Delta      \}.
\]
Then $|\Lambda| \leq 2|\Delta|$ is finite.
 Let $m^*$, with  $n^* \leq m^*<\om$, be
such that for all $n\geq m^*$
all of the values
\[
f_{\a^*_1}(n), ~ f_{\a_{j_1, l_1}}(n), ~ f_{\a_{j_2, l_2}}(n), ~ f_{\a^*_2}(n),
\]
are all different, where $(j_1, l_1),(j_2, l_2) \in \Lambda.$

\begin{claim}
There exists $p_1 \leq p$ such that for all  $(j,l) \in \Lambda$,
$$p_1\Vdash `` k(\imath_{jl})=\min\{k<\om: \lusim{r}_{\imath_{jl}, \imath_{jl}}(k)=1\}=
m^*~\text{''}.$$
\end{claim}
\begin{proof}
Let  $S_{p_1} \subseteq 2^{(\k \times \kappa) \times \omega}$ be defined by
\[
S_{p_1}=\{y \in 2^{(\k \times \kappa) \times \omega}: \forall (j,l)\in \Lambda~[(\forall n<m^*,~ y(\imath_{jl}, \imath_{jl}, n)   =0) \text{~and~}  y(\imath_{jl}, \imath_{jl}, m^*)=1 ]               \}.
\]
Then $\mu_{\k \times \k}(S_{p_1})=2^{-|\Lambda|(m^*+1)} >0$, so
$p_1=[S_{p_1}] \in \MRB(\kappa \times \k)$.
Further, for all $(j, l) \in \Lambda$ and $n<m^*$,
$p_1\Vdash `` \lusim{r}_{\imath_{jl}, \imath_{jl}}(n)=0
$'',  $p_1\Vdash `` \lusim{r}_{\imath_{jl}, \imath_{jl}}(m^*)=1
$'' and thus for  $(j, l) \in \Lambda$,
$$p_1\Vdash `` k(\imath_{jl})=\min\{k<\om: \lusim{r}_{\imath_{jl}, \imath_{jl}}(k)=1\}=
m^*~\text{''},$$

as required
\end{proof}
Before we continue, let us make an assumption on $T_b.$ For each $n<\om$ let $\Phi_n=\{(\a_{\imath_{jl}}, m):  (j, l)  \in \Lambda, m<n            \} \subseteq I \times \om$.
Then for a countable subset $T'$ of $I \times \om$, $\{ x \upharpoonright \Phi_n: x \in T'\} = 2^{H_n}$, for all $n<\om$. As $[T_b]=[T_b \cup T'],$
let us assume without loss of generality that $T' \subseteq T_b.$

Set
\begin{center}
$J=\{ f_{\a_{jl}}(m^*+m): (j, l) \in \Lambda$ and $ m< \om             \} \subseteq \k.$
\end{center}
Note that by our choice of $m^*,$ for all $m$ and all $(j_1, l_1), (j_2, l_2) \in \Lambda$, $f_{\a_{j_1l_1}}(m^*+m) \neq f_{\a_{j_2l_2}}(m^*+m)$.
Set

$\hspace{1.cm}$$\bar S=\{y \in S_{p_1}: \forall n < \om, \exists x \in T_b, \forall (j, l)  \in \Lambda, \forall m<n$

$\hspace{4.2cm}$  $(~y(f_{\a_{jl}}(m^*+m), f_{\a_{jl}}(m^*+m), m)=x(\a_{\imath_{jl}}, m)~)                \}.$

By  the above remarks, $\bar S$ is well-defined.
We also have
$\bar S =\bigcap_{n<\om}S_n,$
where
\[
S_{n}= \{ y \in S_{p_1}:\exists x \in T_b,  \forall (j, l)  \in \Lambda, \forall m<n (~y(f_{\a_{jl}}(m^*+m), f_{\a_{jl}}(m^*+m), m)=x(\a_{\imath_{jl}}, m)~)           \}.
\]
Let
$$W_n=\{ (f_{\a_{jl}}(m^*+m), f_{\a_{jl}}(m^*+m), m):     (j, l)  \in \Lambda, m<n           \}$$
 and
\[
\Delta_n=\{t: W_n \to 2 \mid \exists x \in T_b,   \forall (j, l)  \in \Lambda, \forall m<n,   ( y(f_{\a_{jl}}(m^*+m), f_{\a_{jl}}(m^*+m), m)=x(\a_{\imath_{jl}}, m)   )             \}.
\]
By our assumption, $T' \subseteq T_b, |\Delta_n|=2^{|W_n|}$, and hence, $\mu_{\k \times \k}(\bigcup_{t \in \Delta_n} [t])= \sum_{t \in \Delta_n}2^{|t|}= 2^{|W_n|}2^{-|W_n|}=1$.
We have
$S_n= S_{p_1} \cap \bigcup_{t \in \Delta_n} [t]$, so
\begin{center}
$\mu_{\k \times \k}(S_n) =\mu_{\k \times \k}(S_{p_1})+\mu_{\k \times \k}( \bigcup_{t \in \Delta_n} [t]) - \mu_{\k \times \k}(S_{p_1} \cup  \bigcup_{t \in \Delta_n} [t])=\mu_{\k \times \k}(S_{p_1}).$
\end{center}

It follows that
$\mu_{\k \times \k}(S_{p_1}\setminus S) =\mu_{\k \times \k}( \bigcup_{n<\om }(S_{p_1}\setminus S_n) \leq \sum_{n<\om}\mu_{\k \times \k}(S_{p_1}\setminus S_n)=0$,
and so
 $\mu_{\k \times \k}(S)=\mu_{\k \times \k}(S_{p_1})>0$.
Let $\ov{p}=[\bar S]$. Then $\ov{p} \in \MRB(\k \times \k)$ and $\ov{p} \leq p.$
\begin{claim}
\label{main claim}
$\ov{p} \Vdash$``$ \lan \lusim{s}_{\a_{jl}} : (j,l)\in \Lambda \ran$ extends $b$''.
\end{claim}
\begin{proof}
Suppose $(j,l) \in \Lambda$ and $n<\om.$  Let $y \in \bar S$. Thus
we can find $x \in T_b$ such that
\[
\forall m<n ~(~y(f_{\a_{jl}}(m^*+m), f_{\a_{jl}}(m^*+m), m)=x(\a_{\imath_{jl}}, m)~).
\]
But then

$\hspace{1.1cm}$$\ov{p} \Vdash ~\text{``} \lusim{s}_\a(m)=\lusim{s}_{\a_{jl}}(m)$

$\hspace{3.2cm}$$= \lusim{r}_{f_{\a_{jl}}(m^*+m), f_{\a_{jl}}(m^*+m)}(0)$

$\hspace{3.2cm}$$= y(f_{\a_{jl}}
(m^*+m), f_{\a_{jl}}
(m^*+m), 0)$

$\hspace{3.2cm}$$ = x(\a_{jl},m)$

$\hspace{3.2cm}$$ =x(\a, m)    \text{''.}$

The result follows.
\end{proof}
We now consider those $(j, l)$'s, $j \in \{1, 2\}, l< k_j$, which do not appear in $\Delta$. Fix  such a pair $(j, l)$. Also let $n<\om.$ Then there
is $(j_1, l_1) \in \Lambda$  such that for each $m<n,$ $b \nleq c(j, j_1, l, l_1, m, m)$,
i.e., $b \nVdash$``$ \lusim{s}_{\a_{j,l}}(m) \neq     \lusim{s}_{\a_{j_1,l_1}}(m) $''.
So there exists $b_{jln}=[T_{jln}] \leq b$ such that $\forall m<n,~b_{jln} \Vdash$``$ \lusim{s}_{\a_{j,l}}(m) =     \lusim{s}_{\a_{j_1,l_1}}(m)$''.

Note that $\mu_I(T_{jln}\setminus T_b)=0$. Since there are only countably many such tuples $(j, l, n),$
\begin{center}
$\mu_I(\bigcup_{n<\om, (j,l)\in \Lambda}T_{jln} \setminus T_b )=0.$
\end{center}
This implies $[T_b] = [T_b \cup \bigcup_{ n<\om, (j, l)\in \Lambda}T_{jln}],$
so without loss of generality, each $T_{jln}$ is contained in $T_b$ where $n<\om$ and $(j, l) \in \Lambda.$
 Now Claim \ref{main claim} implies the following:
\begin{claim}
\label{fullness}
$\ov{p} \Vdash$``$ \lan \lusim{s}_{\a} : \a \in I \ran$ extends $b$''.
\end{claim}
$(*)$ follows, which completes the proof of Lemma \ref{randomness 1}.
\end{proof}
Theorem \ref{first} follows.
\end{proof}
The next theorem follows immediately from Theorem \ref{first} and the arguments from
\cite{g-g1}.
\begin{theorem}
\begin{itemize}
\item [(a)] Suppose that $V$ satisfies $GCH$,
$\k=\bigcup_{n<\om} \k_n$ and $\bigcup_{n<\om} o(\k_n)=\k$
(where $o(\k_n)$ is the Mitchell order of $\k_n$). Then there
exists a cardinal preserving generic extension $V_1$ of $V$
satisfying $GCH$ and having the same reals as $V$ does, so that
adding $\k$-many random reals over $V_1$ produces $\k^+$-many random
reals over $V$.

\item [(b)] Suppose $V$ is a model of $GCH$. Then there is a generic
extension $V_1$ of $V$ satisfying $GCH$ so that
the only
cardinal of $V$ which is collapsed in $V_1$ is $\aleph_1$ and such that
adding
$\aleph_\om$-many random reals to $V_1$ produces
$\aleph_{\om+1}$-many of them over $V$.

\item [(c)] Suppose $V$ satisfies $GCH$. Then there is a generic
extension $V_1$ of $V$ satisfying $GCH$ and having the same reals
as $V$ does, so that
the
only cardinals of $V$ which are collapsed in $V_1$ are $\aleph_2$
and $\aleph_3$ and
 such that adding $\aleph_\om$-many random reals to $V_1$
produces $\aleph_{\om+1}$-many of them over $V$.

\item [(d)] Suppose that $\k$ is a strong cardinal, $\l\geq\k$ is
regular and $GCH$ holds. Then there exists a cardinal preserving
generic extension $V_1$ of $V$ having the same reals as $V$ does,
so that adding $\k$-many random reals over $V_1$ produces $\l$-many
of them over $V$.

\item [(e)] Suppose that there is a strong cardinal and $GCH$ holds. Let
$\a<\om_1$. Then there is a model $V_1\supset V$ having the same
reals as $V$ and satisfying $GCH$ below $\aleph^{V_1}_{\om}$ such
that adding $\aleph^{V_1}_{\om}$-many random reals to $V_1$
produces $\aleph^{V_1}_{\a+1}$-many of them over $V$.
\end{itemize}
\end{theorem}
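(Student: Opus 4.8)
The plan is to derive the final theorem (parts (a)--(e)) from Theorem \ref{first} in exactly the way the analogous corollaries were derived from [1, Theorem 2.1] in \cite{g-g1}. The point is that Theorem \ref{first} reduces everything about random reals to a purely combinatorial hypothesis on the pair $(V,V_1)$: in $V_1$ we need (i) a scale, i.e.\ an increasing-mod-finite sequence $\langle f_\alpha:\alpha<\lambda\rangle$ in $\prod_n(\kappa_{n+1}\setminus\kappa_n)$ of length $\lambda$, and (ii) a club $C\subseteq\lambda$ avoiding ordinals of countable $V$-cofinality, with $\kappa$ singular of cofinality $\omega$ in $V_1$ and $\lambda$ regular there. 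Since the constructions of $V_1$ in \cite{g-g1} are forcing constructions that add no new reals (or control exactly which reals/cardinals change) while arranging precisely these two combinatorial objects, one simply reuses those models verbatim, checks hypotheses $(a)$--$(e)$ of Theorem \ref{first}, and concludes.

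First I would handle part $(d)$, the clean prototype. Given a strong cardinal $\kappa$ and regular $\lambda\ge\kappa$ under $GCH$, one uses the extender-based Prikry-type forcing of \cite{g-g1} that singularizes $\kappa$ to cofinality $\omega$ with Prikry sequence $\langle\kappa_n:n<\omega\rangle$, adds a scale of length $\lambda$ in $\prod_n(\kappa_{n+1}\setminus\kappa_n)$, is cardinal preserving, and adds no new reals; the club $C\subseteq\lambda$ avoiding points of countable $V$-cofinality exists because $\lambda$ is regular and, being cardinal-preserving and real-preserving, $V$-cofinality $\omega$ is the same as $V_1$-cofinality $\omega$, so $C$ can be taken to be the non-stationary-in-the-relevant-sense set of ordinals of uncountable cofinality, or simply the set of former regular cardinals above $\kappa$ if $\lambda>\kappa$, and $C=\lambda\setminus(\text{sup of a cofinal }\omega\text{-sequence})$ trivialities when $\lambda=\kappa^+$. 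Then Theorem \ref{first} applies directly. Part $(e)$ is the same construction collapsing cardinals in the interval $(\kappa,\aleph_\omega^{V_1})$ appropriately so that $\kappa$ becomes $\aleph_\omega^{V_1}$ and $\lambda$ becomes $\aleph_{\alpha+1}^{V_1}$, again quoting \cite{g-g1}; here one must check the scale and club survive the collapse, which they do because the collapses are sufficiently closed.

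Next, parts $(a)$--$(c)$ use the Mitchell-order/short-extender hypotheses rather than a strong cardinal, and here one invokes the corresponding models from \cite{g-g1}: part $(a)$ from a sequence $\langle\kappa_n\rangle$ with $\sum_n o(\kappa_n)=\kappa$ one builds, by the Gitik--Magidor extender-based forcing, a cardinal- and real-preserving $V_1\models GCH$ in which $\kappa$ has cofinality $\omega$ and there is a scale of length $\kappa^+$ in $\prod_n(\kappa_{n+1}\setminus\kappa_n)$; the club $C\subseteq\kappa^+$ is immediate since $\kappa^+$ is regular and reals/cardinals are preserved. Parts $(b)$ and $(c)$ are then obtained by composing with a Levy collapse making $\kappa=\aleph_\omega^{V_1}$ while collapsing only $\aleph_1$ (resp.\ only $\aleph_2,\aleph_3$) of $V$; one checks the scale persists and recomputes which ordinals have countable $V$-cofinality after the collapse, verifying $(e)$ of Theorem \ref{first} still holds.

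The main obstacle, as in \cite{g-g1}, is not the random-real argument at all --- that is entirely absorbed into Theorem \ref{first} --- but rather verifying hypothesis $(e)$, the existence of the club $C$ avoiding points of countable $V$-cofinality, in the models where cardinals \emph{are} collapsed (parts $(b),(c),(e)$): collapsing cardinals can create new ordinals of countable cofinality, so one must argue that the set of ordinals below $\lambda$ whose $V$-cofinality is uncountable still contains a club, which relies on the collapse being done with conditions closed enough that no ordinal of uncountable $V$-cofinality acquires countable cofinality, together with the regularity of $\lambda$ in $V_1$. Since this is exactly the bookkeeping carried out in \cite{g-g1} for the Cohen case and the collapses involved are identical, the argument transfers without change, and the theorem follows.
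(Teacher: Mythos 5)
Your proposal matches the paper's approach: the paper gives no separate argument for this theorem, stating only that it follows immediately from Theorem \ref{first} together with the model constructions of \cite{g-g1}, which is exactly the reduction you carry out (reusing those models and verifying the scale and club hypotheses). Your additional remarks on checking hypothesis $(e)$ after the collapses are consistent with the bookkeeping in \cite{g-g1}, so the proposal is correct and essentially identical in route to the paper's.
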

We can also use ideas of the proof of Theorem \ref{first} to get the following theorem,
which is an analogue of [1, Theorem 3.1] for random reals.

\begin{theorem} Suppose that $V$ satisfies $GCH$. Then there is a cofinality
preserving generic extension $V_1$ of $V$ satisfying $GCH$ so that
adding a random real over $V_1$ produces $\aleph_1$-many random
reals over $V$.
\end{theorem}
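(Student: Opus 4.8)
The plan is to imitate the proof of Theorem~\ref{first} in the case $\kappa=\aleph_0$, $\lambda=\aleph_1$ (so that ``a random real'', viewed as $\MRB(\omega)$, supplies $\omega$-many mutually random reals), the one genuinely new point being a substitute for hypothesis $(e)$: when $\aleph_1$ is preserved there is no club of $\omega_1$ avoiding points of countable $V$-cofinality, so the club-decomposition of $\lambda$ used in Theorem~\ref{first} is unavailable. Instead we work with a \emph{single} ``interval'', namely all of $\omega_1$, and a single generic shift, and we push the combinatorial work that the club did in Theorem~\ref{first} into the definition of $V_1$.

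\textbf{The model $V_1$.} Work in $V$. Fix an increasing sequence $\langle k_n:n<\omega\rangle$ of natural numbers with $k_0=0$ and $k_{n+1}-k_n\to\infty$ and put $a_n=[k_n,k_{n+1})$. Let $\mathbb{P}\in V$ be a ccc forcing of size $\aleph_1$ that generically adds a sequence $\langle f_\alpha:\alpha<\omega_1\rangle$, increasing mod finite in $\prod_n a_n$, which is ``sufficiently random over $V$'' in the following sense: for every countable $I\in V$ the decoding map $\pi_I$ defined below --- which depends only on $I$ and on $\langle f_\alpha:\alpha\in I\rangle$ --- pushes the measure of the random algebra forward to a measure on $2^{I\times\omega}$ that, on Borel sets coded in $V$, is absolutely continuous with respect to $\mu_I$. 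The construction of such a $\mathbb{P}$ --- its conditions being finite approximations to the $f_\alpha$'s together with whatever bookkeeping side conditions are needed so that the generic meets, for each of the $\aleph_1$-many $V$-coded null Borel subsets of $2^{\omega_1\times\omega}$, a dense set forcing the absolute-continuity clause --- is the heart of the matter; here I only record that it can be arranged ccc of size $\aleph_1$. Since $\mathbb{P}$ is ccc it preserves cofinalities, and since $|\mathbb{P}|=\aleph_1$ and $V\models GCH$, $GCH$ is preserved. Let $G$ be $\mathbb{P}$-generic over $V$, set $V_1=V[G]$ and $\langle f_\alpha:\alpha<\omega_1\rangle=\bigcup G$.

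\textbf{The decoding, and the reduction.} Force to add a random real $r$ over $V_1$. Identifying $2^\omega$ with $2^{(\{\ast\}\cup\omega)\times\omega}$, write $r=r_\ast{}^\frown\langle r_m:m<\omega\rangle$, so that $r_\ast$ and each $r_m$ is a random real over $V_1$ (this is precisely the point of the remark in the introduction that random reals may depend on $\omega$-many coordinates). Put $k=k(r)=\min\{n:r_\ast(n)=1\}$ and define, for $\alpha<\omega_1$,
\[
s_\alpha(n)=r_{f_\alpha(k+n)}(0)\qquad(n<\omega).
\]
This is the definition of $s_\alpha$ from the proof of Theorem~\ref{first}, with the single shift $k$ in place of the family $\langle k(\imath):\imath<\kappa\rangle$ --- forced on us here since a single random real supplies only $\omega$-many data reals rather than $\kappa\times\kappa$-many. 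We claim $\langle s_\alpha:\alpha<\omega_1\rangle$ is $\MRB(\omega_1)$-generic over $V$. By Lemma~\ref{characterization of random reals} it suffices to fix a countable $I\subseteq\omega_1$, $I\in V$, and show $\langle s_\alpha:\alpha\in I\rangle$ is $\MRB(I)$-generic over $V$; and for this one proves, exactly as the statement $(\ast)$ in the proof of Lemma~\ref{randomness 1}, that for every condition $p$ of the random forcing over $V_1$ and every open dense $D\in V$ of $\MRB(I)$ there is $\bar p\le p$ with $\bar p\Vdash$``$\langle\dot s_\alpha:\alpha\in I\rangle$ extends an element of $D$''.

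\textbf{Carrying out $(\ast)$, and the main obstacle.} Fix $p$, $D$ and $b=[T_b]\in D$ with $T_b$ closed (WLOG). One re-runs the proof of Lemma~\ref{randomness 1} almost verbatim. First, the collision set $\Delta=\{(\alpha,\beta,n,m):\alpha,\beta\in I,\ b\le\|\dot s_\alpha(n)\ne\dot s_\beta(m)\|\}$ is finite: this is Claim~\ref{finiteness}, and its measure estimate $(\dag)$ carries over unchanged. Next, with $\Lambda$ the finite set of ordinals occurring in $\Delta$, one finds $p_1\le p$ forcing $k=m^\ast$ for a single value $m^\ast$ large enough that the functions $\langle f_\alpha:\alpha\in\Lambda\rangle$ are pairwise distinct at every coordinate $\ge m^\ast$ (possible since the scale is increasing mod finite and $\Lambda$ is finite --- this is the first Claim after Claim~\ref{finiteness}, collapsed to a single shift). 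Then one builds $\bar p\le p_1$ coding $b$ into the coordinates $(f_\alpha(m^\ast+m),0)$, $\alpha\in\Lambda$, $m<\omega$, via the ``$\bar S=\bigcap_n S_n$'' construction, so that $\bar p\Vdash$``$\langle\dot s_\alpha:\alpha\in\Lambda\rangle$ extends $b$'' (Claim~\ref{main claim}). Finally one extends to all of $I$: each $\alpha\in I\setminus\Lambda$ agrees, below a suitable refinement of $b$, with some $\alpha'\in\Lambda$ on an arbitrarily long initial segment (because $\alpha$ does not occur in $\Delta$), and absorbing the relevant conditions into $T_b$ as in the text upgrades Claim~\ref{main claim} to ``$\langle\dot s_\alpha:\alpha\in I\rangle$ extends $b$'' (Claim~\ref{fullness}). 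This gives $(\ast)$, hence the theorem. The step where this can break --- and hence the main obstacle --- is the finiteness of $\Delta$ together with the measure bookkeeping in the coding step: both need the decoding $y\mapsto\langle s_\alpha[y]:\alpha\in I\rangle$ to be ``measure-spreading relative to $V$'', i.e.\ $(\pi_I)_\ast\mu$ should be, on $V$-coded Borel sets, equivalent to $\mu_I$, and this is exactly the clause built into $\mathbb{P}$. It is also precisely here that passing to $V_1$ is essential: if one tried $V_1=V$ --- or any $\sigma$-closed extension of $V$, in which the restricted scales $\langle f_\alpha:\alpha\in I\rangle$ ($I\in V$ countable) return to $V$ --- then $\pi_I$ would be a map lying in $V$, and by pigeon-hole on the finite blocks $a_n$ it genuinely correlates coordinates, so $(\pi_I)_\ast\mu\not\ll\mu_I$ for some such $I$ and $\langle s_\alpha:\alpha\in I\rangle$ fails to be generic over $V$ (indeed $\MRB(\omega_1)^V$ has Maharam type $\aleph_1$, so it cannot regularly embed into $\MRB(\omega)^V$ alone). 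Thus the real work lies in designing the ccc forcing $\mathbb{P}$ so that its generic scale is random enough over $V$ --- equivalently, so that $\MRB(\omega_1)^V$ regularly embeds into $\mathbb{P}\ast\dot\MRB(\omega)$ --- while keeping $|\mathbb{P}|=\aleph_1$; granting that, the rest is a routine (if intricate) transcription of the measure computations and coding argument of Lemma~\ref{randomness 1} to the single-shift setting.
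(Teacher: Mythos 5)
There is a genuine gap, and it sits exactly where you yourself locate ``the heart of the matter'': the forcing $\mathbb{P}$ is never constructed, and the way you propose to construct it does not work as described. You ask for a ccc forcing of size $\aleph_1$ adding a scale $\langle f_\alpha:\alpha<\omega_1\rangle$ such that, for every countable $I\in V$, the decoding map kills every $V$-coded $\mu_I$-null set, and you suggest arranging this by bookkeeping: meeting, for each of the $\aleph_1$-many $V$-coded null sets, ``a dense set forcing the absolute-continuity clause''. But for a fixed null set $N$ the statement $\mu(\pi_I^{-1}(N))=0$ is a property of the entire infinite tail of the scale restricted to $I$; a condition carrying only finitely much information about the $f_\alpha$'s cannot force it, so there is no dense set of conditions deciding it and the bookkeeping scheme has no content as stated. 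Moreover, granting such a $\mathbb{P}$ is essentially granting the theorem: as you note yourself, what is needed is precisely that $\MRB(\omega_1)^V$ embeds regularly into $\mathbb{P}\ast\dot\MRB(\omega)$, so your argument reduces the theorem to an unproved assertion of nearly the same strength, and the rest of your write-up (the single-shift $\Delta$-finiteness, the coding of $b$, the absorption step) invokes that unproved property at both of its key measure-theoretic steps.

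The intended route (this theorem is stated in the paper as the random analogue of Theorem 3.1 of the first Cohen paper, to be proved ``using ideas of the proof of Theorem~\ref{first}'') is more concrete and avoids any bookkeeping over null sets: take $\mathbb{P}$ to be the forcing adding the increasing mod finite sequence $\langle f_\alpha:\alpha<\omega_1\rangle\in\prod_n a_n$ by \emph{finite conditions}; this is ccc of size $\aleph_1$, hence preserves cofinalities and $GCH$. One then proves the analogue of $(\ast)$ of Lemma~\ref{randomness 1} by a density argument in the two-step iteration $\mathbb{P}\ast\dot\MRB(\omega)$ over $V$: given a condition and a dense $D\in V$ of $\MRB(I)$, one first extends the finite scale-condition, using the freedom to assign the finitely many relevant undecided values $f_\alpha(n)$ so that they are pairwise distinct (this replaces both clause $(e)$ of Theorem~\ref{first} and your postulated measure property), and then runs the measure computations of Claims~\ref{finiteness}--\ref{fullness} on the random side for those coordinates. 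Your outer shell — the single shift $k$, finiteness of $\Delta$, the coding set $\bar S$, the absorption of the $\alpha\notin\Lambda$ — is a reasonable transcription of Lemma~\ref{randomness 1}, but without the genericity of the scale being exploited through the iteration (rather than asserted as a property of an unspecified $\mathbb{P}$) the proof is incomplete.
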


\section{The second general fact about adding many random reals}
In this section, we prove our second general result which is an analogue of Theorem 2.1 form \cite{g-g2}.
Then we use the result to obtain similar results as in \cite{g-g2} for random reals.
\begin{theorem}
\label{second}
Suppose $\kappa < \lambda$ are infinite (regular or singular) cardinals, and let $V_1$
be an  extension of $V.$ Suppose that in $V_1:$
\begin{itemize}
\item [$(a)$] $\kappa < \lambda$ are still infinite cardinals.

\item [$(b)$] there exists an increasing sequence $\langle \kappa_n: n < \omega  \rangle$ of regular cardinals, cofinal in $\kappa.$ In particular $cf(\kappa) = \omega.$

\item [$(c)$] there is an increasing (mod finite) sequence $\langle
f_{\alpha}: \alpha < \lambda \rangle$ of functions in the product
$\prod_{n< \omega}(\kappa_{n+1}\setminus\kappa_n).$

\item [$(d)$] there is a partition $\langle S_{\sigma} : \sigma<\kappa \rangle$ of $\lambda$ into sets of size $\lambda$ such
that for every countable
set $I \in V$ and  every $\sigma<\kappa$ we have $|I\cap S_{\sigma}|< \aleph_0.$
\end{itemize}
Then adding $\kappa$-many random reals over $V_1$ produces $\lambda$-many random reals over $V.$
\end{theorem}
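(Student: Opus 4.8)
The plan is to rerun the proof of Theorem \ref{first}, with the partition $\langle S_\sigma : \sigma<\kappa\rangle$ of hypothesis $(d)$ taking over the role that the club $C$ (hypothesis $(e)$) played there; since $(d)$ is assumed directly, no case distinction between regular and singular $\lambda$ is needed. First I would force over $V_1$ to add $\kappa$ random reals; as $|\kappa\times\kappa|=\kappa$ the forcing is $\MRB(\kappa)\cong\MRB(\kappa\times\kappa)$, so I may take the generic object to be a sequence $\langle r_{\sigma,\tau}:\sigma,\tau<\kappa\rangle$ that is $\MRB(\kappa\times\kappa)$-generic over $V_1$, and --- exactly as in Theorem \ref{first} --- use the diagonal reals to set, for each $\sigma<\kappa$, a shift $k(\sigma)=\min\{k<\omega:r_{\sigma,\sigma}(k)=1\}$ (defined on a measure-one set). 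Using $(c)$ I then define $\langle s_\alpha:\alpha<\lambda\rangle$ by
\[
s_\alpha(n)=r_{\sigma,\,f_\alpha(k(\sigma)+n)}(0)\qquad(n<\omega),
\]
where $\sigma=\sigma(\alpha)$ denotes the unique index with $\alpha\in S_\sigma$; this is legitimate because $f_\alpha(m)\in\kappa_{m+1}\setminus\kappa_m\subseteq\kappa$. As in Theorem \ref{first}, everything then reduces to the analogue of Lemma \ref{randomness 1}: that $\langle s_\alpha:\alpha<\lambda\rangle$ is a sequence of $\lambda$ random reals over $V$.

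By Lemma \ref{characterization of random reals} it suffices to show, for each countable $I\in V$ with $I\subseteq\lambda$, that $\langle s_\alpha:\alpha\in I\rangle$ is $\MRB(I)$-generic over $V$, and this reduces --- exactly as in the passage to $(*)$ in the proof of Theorem \ref{first} --- to producing, for every $p\in\MRB(\kappa\times\kappa)$ and every open dense $D\in V$ of $\MRB(I)$, some $\bar p\leq p$ forcing that $\langle\lusim{s}_\alpha:\alpha\in I\rangle$ extends an element of $D$. Hypothesis $(d)$ now supplies the combinatorial picture dual to the one in Theorem \ref{first}: since $I$ is countable and $|I\cap S_\sigma|<\aleph_0$ for each $\sigma$, the set $\Sigma=\{\sigma<\kappa:I\cap S_\sigma\neq\emptyset\}$ is countable and $I=\bigsqcup_{\sigma\in\Sigma}(I\cap S_\sigma)$ is a partition of $I$ into finite pieces (where Theorem \ref{first} had finitely many, possibly infinite, blocks). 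Two features of this partition drive the argument: if $\alpha,\alpha'$ lie in distinct pieces then $s_\alpha$ and $s_{\alpha'}$ are read off disjoint families $\{r_{\sigma,\cdot}\}$ of the random reals and so are mutually generic; and within one piece $I\cap S_\sigma$ the finitely many functions $f_\alpha$ ($\alpha\in I\cap S_\sigma$) are pairwise eventually distinct, so there is a threshold $n^*_\sigma<\omega$ past which the values $f_\alpha(n)$, $\alpha\in I\cap S_\sigma$, are pairwise distinct.

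Given such $p$ (I take $p=1$) and $D$, fix $b=[T_b]\in D$ with $T_b$ closed and write $2^{I\times\omega}\setminus T_b=\bigsqcup_k[t_k]$. Then I would reproduce the proof of Claim \ref{finiteness} to show that the set $\Delta$ of all $(\alpha,n,\alpha',n')$ for which $T_b$ already forces $\lusim{s}_\alpha(n)\neq\lusim{s}_{\alpha'}(n')$ is finite: if it were infinite, extract pairwise disjoint finite $X_k\subseteq\Delta$ with enough coordinates outside $\dom(t_k)$, build a positive-measure $\bar T\subseteq T_b$ on which, for every $k$, some tuple of $X_k$ has its two coordinates forced equal, and lift $\bar T$ to a positive-measure condition of $\MRB(\kappa\times\kappa)$ forcing the corresponding $\lusim{s}_\alpha(n)=\lusim{s}_{\alpha'}(n')$, a contradiction. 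Let $\Lambda$ be the finite set of ordinals occurring in some tuple of $\Delta$; choose $m^*<\omega$ beyond the thresholds $n^*_\sigma$ of all pieces met by $\Lambda$ and beyond the finitely many positions at which the relevant $f_\alpha$'s collide; and, by passing to a positive-measure $p_1\leq p$, force $k(\sigma)=m^*$ for the finitely many $\sigma$ with $\Lambda\cap S_\sigma\neq\emptyset$ (the analogue of the two claims following Claim \ref{finiteness}). Once this is done the columns $f_\alpha(m^*+n)$ used inside each such piece are pairwise distinct, so the lifting of $2^{I\times\omega}$-conditions on the $s_\alpha$'s to conditions on the $r$'s is conflict-free. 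Finally, after the usual ``filling up'' of $T_b$ by a countable dense $T'\subseteq T_b$ (which leaves $[T_b]$ unchanged) so that $T_b$ constrains none of the coordinates relevant to $\Lambda$, one builds $\bar S\leq S_{p_1}$ whose generic makes $\langle\lusim{s}_\alpha:\alpha\in\Lambda\rangle$ extend $b$ (Claim \ref{main claim}), and then, exactly as in Claim \ref{fullness}, uses that every remaining $\alpha\in I$ has $\lusim{s}_\alpha$ forced (on a suitable extension) to agree with $\lusim{s}_{\alpha'}(m)$ for all $m$ below any given $n$, for a suitable $\alpha'\in\Lambda$ (because the relevant tuples are not in $\Delta$), to conclude that $\bar p=[\bar S]$ forces ``$\langle\lusim{s}_\alpha:\alpha\in I\rangle$ extends $b$'', which is $(*)$.

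I expect the main obstacle to be making ``$\Delta$ is finite'', together with the subsequent bookkeeping, go through now that $I$ is cut into countably many pieces rather than finitely many: one must genuinely use that the pieces $I\cap S_\sigma$ are finite and that distinct pieces are fed by disjoint rows of random reals, so that any forced inequality linking coordinates from two distinct pieces must come from the finitely many genuine constraints carried by $T_b$ --- which is exactly what the proof of Claim \ref{finiteness} extracts --- and one must check that forcing the finitely many relevant shifts $k(\sigma)$ up to the common value $m^*$ does not recreate collisions inside a piece, which is precisely why $m^*$ is taken beyond all the thresholds $n^*_\sigma$. Granting these points, the rest is a long but routine transcription of the proof of Lemma \ref{randomness 1}.
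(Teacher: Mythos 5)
Your proposal follows the paper's own proof essentially step for step: reduce via Lemma \ref{characterization of random reals} to a countable $I\in V$, split $I$ by hypothesis $(d)$ into countably many finite pieces $I\cap S_{\sigma}$, prove the analogue of Claim \ref{finiteness} to get a finite $\Delta$, force the finitely many relevant shifts on the diagonal reals, and then build the positive-measure condition $\bar S$ and conclude exactly as in Lemma \ref{randomness 1} (Claims \ref{main claim} and \ref{fullness}). The only deviations --- transposing the two $\kappa$-coordinates in the definition of $s_{\alpha}$ and using one uniform threshold $m^*$ in place of the paper's per-block thresholds $n^*_j$ --- are cosmetic.
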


\begin{proof}
Force to add
$\k$-many random reals over $V_1$. Let us write them as  $\lan r_{i,\sigma} : i,\sigma <\k\ran$.
Also in $V,$ split  $\kappa$  into $\kappa$-blocks $B_{\sigma}, \sigma<\kappa,$ each of size $\kappa,$ and  let $\lan f_\a : \a<\l\ran\in V_{1}$ be an
increasing (mod finite) sequence in $\prod_{n<\om}(\k_{n+1}
\setminus \k_{n})$. Let $\a<\l$. We define a real $s_\a$ as
follows. Pick $\sigma<\kappa$ such that $\a\in S_{\sigma}.$ Let $k_{\a}=min\{k<\omega: r_{\sigma, \sigma}(k) \}=1$ and set

\begin{center}
$\forall n<\om$, $s_\a(n)=r_{f_\a(n+k_{\a}),\sigma}(0)$.
\end{center}

The following lemma completes the proof.

\begin{lemma} $\lan s_\a:\a<\l\ran$ is a sequence of
$\l$-many random reals over $V$.
\end{lemma}
\begin{proof} First note that $\lan r_{i,\sigma} :
i,\sigma<\k\ran$ is ${\MRB}(\k\times\k)$-generic over $V_1$. By Lemma \ref{characterization of random reals}, it
suffices to show that for  any countable set $I\sse \l$, $I\in V$,
the sequence $\lan s_\a:\a\in I\ran$ is ${\MRB}(I)$-generic over
$V$. Thus it suffices to prove the following

$\hspace{1.5cm}$ For every $p\in {\MRB}(\k\times\k)$
and every open dense subset $D\in V$

$(*)$$\hspace{1cm}$ ~~of ${\MRB}(I)$,~ there is
$\ov{p}\leq p$ such~ that $\ov{p} \Vdash
\ulcorner   \lan \lusim{s}_\a : \a\in I\ran$ extends

$\hspace{1.5cm}$  some element of $D\urcorner. $

 Let $p$ and $D$ be as above and for simplicity suppose that
$p=1_{\MRB(\k\times\k)}=[2^{\k \times \k \times \om}]$. Let $b=[T_b]\in D,$ where $T_b \subseteq 2^{I \times \omega}$.
 As $I$ is countable, we can find  $\{\sigma_j: j< \ov{\om} \leq \omega        \} \subseteq \lambda$
 such that
 $$I= I \cap \bigcup_{\sigma<\lambda} S_\sigma = \bigcup_{j< \ov{\om}} (I \cap S_{\sigma_j}),$$
 and each $I \cap S_{\sigma_j}$ is non-empty.
 By $(d)$, each $I\cap S_{\sigma_j}$ is finite, say
 \[
 I\cap S_{\sigma_j} = \{ \a_{j, 0}, \dots, \a_{j, k_j-1}          \}.
 \]
 For every $j_1, j_2 < \ov{\om},$ $l_1 < k_{j_1}, l_2 < k_{j_2}$ and $n_1, n_2 < \om$ set
 \[
c(j_1, j_2, l_1, l_2, n_1, n_2) = \parallel   \lusim{s}_{\a_{j_1,l_1}}(n_1) \neq     \lusim{s}_{\a_{j_2,l_2}}(n_2)    \parallel.
\]
The following can be proved as in Claim \ref{finiteness}.
\begin{claim}
\label{finiteness2}
The set
$\Delta= \{ (j_1, j_2, l_1, l_2, n_1, n_2): b \leq c(j_1, j_2, l_1, l_2, n_1, n_2)               \}$
is finite. Also, $(j_1, j_2, l_1, l_2, n_1, n_2) \in \Delta$ implies $(j_2, j_1, l_2, l_1, n_2, n_1) \in \Delta.$
\end{claim}
 Let $\Lambda =\{j< \ov{\om}:$ there exists $  (j_1, j_2, l_1, l_2, n_1, n_2) \in \Delta$  with $j=j_1       \}$.
 Then $\Lambda$
 is finite. For each $j \in \Lambda,$ by $(c)$, we can find $n^*_j<\omega$ such that for all $n\geq n^*_j$ and $\a^*_1<\a^*_2$ in $I\cap S_{\sigma_j}$ we have
$f_{\a^*_1}(n)<f_{\a^*_2}(n).$

 Let
\begin{center}
$S'= [\{x \in 2^{\kappa \times \k \times \omega}:  \forall j\in \Lambda ( \forall n< n^*_j, x(\sigma_j, \sigma_j, n)=0$ and $x(\sigma_j, \sigma_j, n^*_j)=1  )    \}  ]$
\end{center}
Then $\mu_{\k \times \k}(S') = 2^{-|\Lambda|(n^*_j+1)}>0$, and so
$p'=[S'] \in  \MRB(\k \times \k).$ Also, for each $j \in \Lambda$ and $l< k_j$, $p' \Vdash \ulcorner k_{\a_{jl}}= n^*_j  \urcorner$.
Let
\[
\bar S= \{ y\ \in S': \forall n<\om \exists x \in T_b, \forall j \in \Lambda \forall l< k_j \forall m<n~ (~  y(f_{\a_{jl}}
(n^*_j+m), \sigma_j, 0)=    x(\a_{jl},m)  ~)     \}.
\]
By our choice of $n^*_j$ there are no collisions and the above definition is well-formed.
Also, by the same arguments as before, $\mu_{\k \times \k}(\bar S)=\mu_{\k \times \k}(S')>0.$

Let
$\ov{p}=[\bar S]$.
Then $\ov{p}\in \MRB(\k\times\k)$ is well-defined and for all $\a=\a_{jl} \in I$,
where
$j \in \Lambda$ and $l<k_j$, and all $y \in S_{\ov{p}}$ we can find $x \in T_b$ such that for $m<n$,

$\hspace{1.1cm}$$\ov{p} \Vdash ~\text{``} \lusim{s}_\a(m)=\lusim{s}_{\a_{jl}}(m)$

$\hspace{3.2cm}$$= \lusim{r}_{f_{\a_{jl}}(n^*_j+m), \sigma_j}(0)$

$\hspace{3.2cm}$$= y(f_{\a_{jl}}
(n^*_j+m), \sigma_j, 0)$

$\hspace{3.2cm}$$ = x(\a_{jl},m)$

$\hspace{3.2cm}$$ =x(\a, m)    \text{''.}$

This implies
\begin{center}
$\ov{p}\Vdash \ulcorner \langle \lusim{s}_{\a_{jl}}: j \in \Lambda, l< k_j   \rangle$ extends $b \urcorner.$
\end{center}
Now, as in the proof of Claim \ref{fullness}, we have the following:
\begin{claim}
$\ov{p}\Vdash \ulcorner \langle \lusim{s}_{\a}: \a \in I  \rangle$ extends $b \urcorner.$
\end{claim}
$(*)$ follows and we are done.
\end{proof}
The theorem follows.
\end{proof}
The following theorem follows from Theorem \ref{second} and the arguments from \cite{g-g2}.
\begin{theorem}
\label{second general theorem}
\begin{itemize}
\item [(a)] Suppose that $GCH$ holds in $V,$  $\kappa$ is a cardinal of countable cofinality and
there are $\kappa$ many measurable cardinals below $\kappa$.
Then there is a cardinal– preserving forcing extension $V_1$ of $V$ not adding new reals and such that adding $\k$-many random reals
 random reals over
$V_1$ produces $\k^{+}$-many random reals over $V$.

\item [(b)] Suppose that $V_1 \supseteq V$ are such that:
\begin{enumerate}
\item $V_1$ and $V$ have the same cardinals and reals,

\item $\kappa < \lambda$ are infinite cardinals of $V_1$,

\item there is no partition $\lan S_{\sigma}:\sigma<\k \ran$ of $\l$ in $V_1$ as in Theorem 3.1$(d).$
\end{enumerate}
Then adding $\kappa$-many random reals over $V_1$ cannot produce $\lambda$-many random reals over $V.$

\item [(c)] The following are equiconsistent:
\begin{enumerate}
\item There exists a pair $(V_1, V_2), V_1 \subseteq V_2$, of models of set theory with the same cardinals and reals and a cardinal $\kappa$ of cofinality $\omega$ (in $V_2$) such that adding $\kappa$-many random reals over $V_2$ adds more than $\kappa$-many random reals over $V_1.$

\item There exists a cardinal $\delta$ which is a limit of $ \delta$-many measurable cardinals.
\end{enumerate}

\item [(d)] Suppose that $V_1 \supseteq V$ are such that $V_1$ and $V$ have the same cardinals and reals and $\aleph_{\delta}$ is less than the first fixed point of the $\aleph$-function. Then adding $\aleph_{\delta}$-many random reals over $V_1$ cannot  produce $\aleph_{\delta+1}$-many random reals over $V.$

\item [(e)]  Suppose $GCH$ holds and there exists a cardinal $\kappa$ which is of cofinality $\omega$ and is a limit of $\kappa$-many
 measurable cardinals. Then there is pair $(V_1, V_2)$ of models of $ZFC, V_1 \subseteq V_2$ such that:
\begin{enumerate}
\item $V_1$ and $V_2$ have the same cardinals and reals.

\item $\kappa$ is the first fixed
point of the $\aleph$-function in $V_1$ (and hence in $V_2$).

\item Adding $\kappa$-many random reals over $V_2$ adds
$\kappa^{+}$-many random reals over $V_1.$
\end{enumerate}
\end{itemize}
\end{theorem}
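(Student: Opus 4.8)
The plan is to derive the positive parts (a), the forward direction of (c), and (e) from Theorem \ref{second}, applied to model pairs built---as in \cite{g-g2}---by Prikry-type forcing over a model with many measurables, and to derive the negative parts (b), (d) (and the reverse direction of (c)) from an analysis of the supports of random reals together with core-model theory. The point that makes the positive parts almost automatic is that hypotheses $(a)$--$(d)$ of Theorem \ref{second} ask for exactly the combinatorial objects that the constructions of \cite{g-g2} were engineered to produce: a cofinal $\omega$-sequence $\langle\kappa_n : n<\omega\rangle$ of regular cardinals below $\kappa$, an increasing-mod-finite (scale) sequence $\langle f_\alpha : \alpha<\lambda\rangle$ in $\prod_n(\kappa_{n+1}\setminus\kappa_n)$, and a partition $\langle S_\sigma : \sigma<\kappa\rangle$ of $\lambda$ into pieces of size $\lambda$ with $|I\cap S_\sigma|<\aleph_0$ for every countable $I\in V$. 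Once those constructions are recalled, verifying the hypotheses is bookkeeping and the conclusions drop out of Theorem \ref{second}.

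Concretely, for (a) I would start from $V\models GCH$ with $\kappa$ of countable cofinality and $\kappa$-many measurables below $\kappa$, run the Prikry-type iteration of \cite{g-g2} along those measurables to obtain $V_1$, and check that $V_1$ preserves all cardinals of $V$, adds no reals, satisfies $GCH$, and carries a scale in $\prod_n(\kappa_{n+1}\setminus\kappa_n)$ for a suitable increasing $\langle\kappa_n\rangle$ cofinal in $\kappa$, together with the required partition of $\kappa^+$; then Theorem \ref{second} with $\lambda=\kappa^+$ gives the conclusion. For (e) I would use the variant of that iteration which additionally interleaves L\'evy collapses, sparsifying the cardinals below $\kappa$ so that $\kappa$ becomes the first fixed point of the $\aleph$-function in an intermediate model $V_1$; over $V_1$ a further forcing that adds no reals and preserves cardinals adds the scale and the partition, producing $V_2\supseteq V_1$ in which $(a)$--$(d)$ of Theorem \ref{second} hold, so Theorem \ref{second} yields clause (3). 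For the forward direction of (c), starting from a cardinal $\delta$ that is a limit of $\delta$-many measurables, the same kind of construction with $\kappa=\delta$ and $\lambda=\delta^+$ produces a pair $(V_1,V_2)$ with the same cardinals and reals over which $\delta$-many random reals add $\delta^+$-many (hence more than $\delta$-many); again one only checks $(a)$--$(d)$.

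For the negative part (b), suppose toward a contradiction that $G$ is $\MRB^{V_1}(\kappa)$-generic and that in $V_1[G]$ some $\langle s_\alpha : \alpha<\lambda\rangle$ is $\MRB^{V}(\lambda)$-generic over $V$. Writing $\langle r_i : i<\kappa\rangle$ for the reals added by $G$, since $\MRB(\kappa)$ satisfies the countable chain condition (Lemma \ref{chain condition lemma}), each $s_\alpha$, by the product structure of $2^{\kappa\times\omega}$, lies in $V_1[\langle r_i : i\in I_\alpha\rangle]$ for some countable $I_\alpha\in V_1$; using that $V$ and $V_1$ have the same reals and cardinals one arranges $\alpha\mapsto I_\alpha$ to be sufficiently definable, and a $\Delta$-system/Fodor argument on $\langle I_\alpha : \alpha<\lambda\rangle$ then extracts a partition $\langle S_\sigma : \sigma<\kappa\rangle$ of $\lambda$ in $V_1$ of exactly the kind forbidden by hypothesis (3), a contradiction. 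I would deduce (d) from (b) via the purely combinatorial fact that when $\aleph_\delta$ is below the first fixed point of the $\aleph$-function (and has countable cofinality), there is no partition of $\aleph_{\delta+1}$ into $\aleph_\delta$ pieces of size $\aleph_{\delta+1}$ each meeting every countable $I\in V$ finitely---a pcf/cardinality obstruction, morally because such a partition together with a scale would force $\aleph_\delta$ to be a fixed point. The reverse direction of (c) is similar in spirit: the existence of such a partition in $V_2$---equivalently the failure of a covering-type principle---yields, via Mitchell's core-model theory, a cardinal that is a limit of measurables in the core model, giving the consistency of clause (2).

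I expect the real content to lie in these two flanks rather than in invoking Theorem \ref{second}. On the positive side it is the verification that the Prikry-type forcings of \cite{g-g2} simultaneously add no reals, preserve cardinals and cofinalities (or, in (e), collapse exactly to the desired effect), preserve $GCH$, and leave the scale and the partition of $\lambda$ available---and that, in (e), sparsifying down to the first $\aleph$-fixed point is compatible with all of this. On the negative side it is running the support and $\Delta$-system extraction of the partition cleanly across the gap between $V$ and $V_1$, together with the pcf obstruction needed for (d) and the core-model computation needed for the reverse half of (c). All of these ingredients are present, in the Cohen setting, in \cite{g-g2}, so the remaining task is largely transcription.
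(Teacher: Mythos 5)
Your proposal takes essentially the same route as the paper: the paper offers no independent argument for this theorem, deriving it exactly as you do by feeding the Prikry-type constructions of \cite{g-g2} (which supply the scale and the partition $\langle S_\sigma:\sigma<\kappa\rangle$) into Theorem \ref{second} for the positive clauses, and by reusing the support/partition-extraction and core-model arguments of \cite{g-g2} for the negative clauses (b), (d) and the reverse half of (c). The only caveat is that your sketches of those negative arguments (the $\Delta$-system/Fodor extraction and the ``pcf obstruction'' for (d)) are stated loosely and would in practice need to follow the precise arguments of \cite{g-g2}, which is exactly what the paper intends.
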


Moti Gitik,
School of Mathematical Sciences, Tel Aviv University, Tel Aviv, Israel.

E-mail address: gitik@post.tau.ac.il

http://www.math.tau.ac.il/gitik/

Mohammad Golshani,
School of Mathematics, Institute for Research in Fundamental Sciences (IPM), P.O. Box:
19395-5746, Tehran-Iran.

E-mail address: golshani.m@gmail.com

http://math.ipm.ac.ir/golshani/

\end{document}